\documentclass[11pt,twoside]{article}
\usepackage{a4}
\usepackage{amssymb,amsmath,amsthm,latexsym}
\usepackage{amsfonts}
\usepackage{amsfonts}
\usepackage{graphicx}
\usepackage{nicematrix}
\usepackage{multirow}

\newtheorem{theorem}{Theorem}[section]

\newtheorem{definition}[theorem]{Definition}

\newtheorem{lemma} [theorem]{Lemma}

\newtheorem{proposition}[theorem]{Proposition}
\newtheorem{remark}[theorem]{Remark}

\usepackage{caption}

\voffset=-12mm
\mathsurround=2pt
\parindent=12pt
\parskip= 4.5 pt
\lineskip=3pt
\oddsidemargin=10mm
\evensidemargin=10mm
\topmargin=55pt
\headheight=12pt
\footskip=30pt
\textheight 8.1in
\textwidth=150mm
\raggedbottom
\pagestyle{myheadings}
\hbadness = 10000
\tolerance = 10000

\vspace{5cm}

\begin{document}
	
	\label{'ubf'}  
	\setcounter{page}{1}                                 %Put here the starting page number

	\markboth {\hspace*{-9mm} \centerline{\footnotesize \sc
			% Put here the left page top label 
			\scriptsize { Generalized Splitting and element splitting operations on $p$-matroids}}
	}
	{ \centerline                           {\footnotesize \small  
			%put here the author's name
			Sachin Gunjal$^1,$ Uday Jagadale$^2,$  Prashant Malavadkar$^3$                                               } \hspace*{-9mm}              
	}

	\begin{center}
		{ 
			{\Large \textbf { \sc Generalized Splitting and element splitting operations on $p$-matroids }
			}
			\\

			\medskip

			{\sc Sachin Gunjal$^1,$ Uday Jagadale$^2,$  Prashant Malavadkar$^3$  }\\
			{\footnotesize  $^{1,2,3}$Dr. Vishwanath Karad MIT World Peace University,
				Pune 411 038,	India.\\
			{\footnotesize e-mail: {\it 1.sachin.gunjal@mitwpu.edu.in, 2.uday.jagdale@mitwpu.edu.in 3.prashant.malavadkar@mitwpu.edu.in,  }}
		}}
	\end{center}

	\thispagestyle{empty}

	\hrulefill

	\begin{abstract}  
		{\footnotesize In this paper, we define generalized splitting  and element splitting operations on $p$-matroids. $p$-matroids are the matroids representable over $GF(p).$ The circuits and the bases of the new matroid are characterized in terms of circuits and bases of the original matroid, respectively. A class of $n$-connected $p$-matroids which gives n-connected $p$- matroids using the generalized splitting operation is also characterized. We also prove that connectivity of $p$-matroid is preserved under element splitting operation. Sufficient conditions to obtain Eulerian $p$-matroid from Eulerian $p$-matroid under splitting and element splitting operations are provided. 
			}
	\end{abstract}
	\hrulefill

	{\small \textbf{Keywords:} $p$-matroid; splitting operation; n-connected matroid; Eulerian matroid  }
	
	\indent {\small {\bf AMS Subject Classification:} 05B35; 05C50; 05C83 }

	\section{Introduction}
 Two element splitting operations on binary matroids is introduced by Raghunathan et al. \cite{RSWS}, and the element splitting operations with respect to $n$-elements on binary matroids is introduced by Shikare \cite{shika}. Also Shikare extended the two elements splitting operation to $n$ element splitting operation in \cite{bingen}. Properties of splitting and element splitting operations are studied in \cite{AM, MDSCGEltSM, SSL, SABS, YWU}. Further,the splitting and the element splitting operation, with respect two elements, on the matroids representabe over $GF(p),$ called $p$-matroid, are discussed in \cite{mjg, elt}. \\
 
 \noindent In the present paper, we discuss the effect of the splitting and the element splitting operation with respect to $n$ elements on $p$ matroids. 
	 
 \noindent We consider only loopless and coloopless $p$-matroids in the present paper. For undefined terms and notions refer to \cite{Oxley}.
	
	\section{Generalized Splitting, and Generalized Element Splitting Operations}
	In this section, we define splitting and element splitting operations with respect to $n$-elements, henceforth called generalized splitting, and generalized element splitting operations, respectively.
	 
	\begin{definition}\label{D1}
		Let $M$ be a $p$-matroid on a set $E$ and let $T\subset E.$ Suppose $A$ is a matrix representation of $M$ over the prime field $GF(p).$ Let $A_T$ be the matrix obtained by adjoining an extra row with entries zero everywhere except in the column corresponding to the members of T where it takes the value $1.$ The vector matroid of matrix $A_T$ is denoted by $M_T.$ The transition from $M$ to $M_T$ is called splitting operation and the matroid $M_T$ is called the splitting matroid. Let $A'_T$ be the matrix obtained from $A_T$ by adjoining an extra column labeled $z$ with entries zero everywhere except in the last row where it takes the value $1.$ The vector matroid of matrix $A'_T$ is denoted by $M'_T.$ The transition from $M$ to $M'_T$ is called element splitting operation and the matroid $M'_T$ is called the element splitting matroid.
	\end{definition}

\begin{remark}
If $T\subset M$ is cocircuit of $M,$ then $M \equiv M_T.$ In such a case the splitting is called a trivial splitting; otherwise it is called non-trivial splitting.  
\end{remark}
 \noindent Let $C$ be a circuit of $M,$ and $C \cap T \neq \phi.$ We say $C$ is a \textbf{PT-circuit} of $M,$ if it is a also circuit of $M_T.$ And if $C$ is not a circuit of $M_T,$  we call it an \textbf{NPT- circuit} of $M.$ We use $\mathcal {C}_0$ to denote the collection of $PT$-circuits and the circuits containing no element of $T.$
 Consider subsets of $E$ of the type $C\cup I$ where $C=\{u_1,u_2,\ldots,u_l\}$ is an $NPT$-circuit of $M$ which is disjoint from an independent set $I=\{v_1,v_2,\ldots,v_k\}$ and $\ T\cap(C\cup I)\neq \phi$. We say $C\cup I$ is a \textbf{PT-dependent set} if it contains no member of $\mathcal {C}_0$ and there are non-zero constants $\alpha_1,\alpha _2,\ldots,\alpha_l$ and $\beta_1, \beta_2,\ldots,\beta_k$ such that $\sum_{i=1}^{l}\alpha_i u_i + \sum_{j=1}^{k}\beta_j v_j = 0 $ and $ \displaystyle \sum_{x \in T\cap(C\cup I)} coeff.(x)\equiv 0(mod \: p).$\\
 
	\begin{lemma} \label{dep}
		Let $M$ be a $p$-matroid and $C_1, C_2$ are disjoints $NPT$-circuits of $M.$ Then $C_1\cup C_2$ is a dependent set in $M_T.$ 
	\end{lemma}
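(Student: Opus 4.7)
The plan is to work directly with the matrix representations $A$ and $A_T$, translating the hypothesis ``$NPT$-circuit'' into a precise statement about the last row of $A_T$, and then produce an explicit linear dependence among the columns of $A_T$ indexed by $C_1\cup C_2$.

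First I would fix notation. Since $C_1=\{u_1,\dots,u_l\}$ is a circuit of $M$, the columns of $A$ indexed by $C_1$ are minimally dependent, so there exist nonzero scalars $\alpha_1,\dots,\alpha_l\in GF(p)$ with $\sum_{i=1}^{l}\alpha_i u_i=0$ in $A$. Viewing the same combination inside $A_T$, the entry contributed to the new row is $s_1:=\sum_{u_i\in C_1\cap T}\alpha_i\pmod p$. The assumption that $C_1$ is an $NPT$-circuit means $C_1$ is a circuit of $M$ that fails to be a circuit of $M_T$; since the old-row relations are unchanged and the columns of $C_1$ remain dependent in $A_T$ only if $s_1\equiv0$, the failure of dependence in $A_T$ forces $s_1\not\equiv0\pmod p$. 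I would record the analogous statement for $C_2=\{w_1,\dots,w_m\}$ with coefficients $\beta_1,\dots,\beta_m$ and nonzero ``$T$-sum'' $s_2$.

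Next I would build the desired relation on $C_1\cup C_2$. Because $C_1\cap C_2=\phi$, setting
\[
\gamma_{u_i}=s_2\,\alpha_i\quad(i=1,\dots,l),\qquad \gamma_{w_j}=-s_1\,\beta_j\quad(j=1,\dots,m),
\]
gives a well-defined assignment of nonzero scalars (here I use that $s_1,s_2\neq 0$ in $GF(p)$ and that all $\alpha_i,\beta_j$ are nonzero because they come from circuit dependences). In the original rows of $A_T$ this combination equals $s_2\cdot 0-s_1\cdot 0=0$, and in the appended row it equals $s_2 s_1 - s_1 s_2 = 0$. Hence $\sum \gamma_x\, x=0$ in $A_T$, so $C_1\cup C_2$ is dependent in $M_T$.

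I do not anticipate a serious obstacle: the only delicate point is ensuring the combination is genuinely nontrivial, which is immediate once one observes that $s_1,s_2\in GF(p)^{\times}$ (by the $NPT$ hypothesis) and that $\alpha_i,\beta_j\neq 0$ (by the circuit property in $M$), together with the disjointness $C_1\cap C_2=\phi$ which prevents cancellation between the two blocks of coefficients. The argument is short and computational; no structural matroid machinery beyond the definition of $M_T$ is needed.
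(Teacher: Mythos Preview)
Your proposal is correct and follows essentially the same approach as the paper: both arguments take the circuit dependences $\sum\alpha_i u_i=0$ and $\sum\beta_j w_j=0$, observe that the $NPT$ hypothesis makes the $T$-sums $s_1,s_2$ nonzero, and then rescale so that the contributions in the appended row cancel. The only cosmetic difference is that the paper multiplies the second relation by $-s_1/s_2$ while you multiply the two relations by $s_2$ and $-s_1$; your version is slightly cleaner in that it avoids a division and is more explicit about why $s_1,s_2\in GF(p)^\times$.
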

\begin{proof}
	Let $C_1=\{x_1, x_2,\ldots, x_k\},$ $C_2=\{y_1, y_2,\ldots, y_l\},$ $|C_1\cap T|=k_1,$ and $|C_2 \cap T|=k_2.$ Without loss of generality assume $x_1, x_2, \ldots, x_{k_1} \in T,$ $y_1, y_2,\ldots, y_{k_2}\in T$ where $k_1 \leq k,$ $k_2\leq l.$ Since $C_1$ and $C_2$ are circuits, there exist non-zero scalars $\alpha_1, \alpha_2, \ldots, \alpha_k$ and $\beta_1, \beta_2, \ldots , \beta_l$ such that\[\alpha_1 x_1+\alpha_2 x_2+\ldots+\alpha_{k_1} x_{k_1}+\dots+\alpha_k x_k=0 (\mod p),\] \[\beta_1 y_1+\beta_2 y_2+\ldots+\beta_{k_2} x_{k_2} +\dots +\beta_l y_l=0(\mod p).\] Let $\alpha = \alpha_1+\alpha_2+\dots + \alpha_{k_1},$ and $\beta = \beta_1+\beta_2+\dots + \beta_{k_2}.$ Multipying the equation 2 by $\frac{-\alpha}{\beta},$ we get \[\beta'_1 y_1+\beta'_2 y_2+\ldots+\beta'_{k_2} x_{k_2} +\dots +\beta'_l y_l=0(\mod p).\] Note that all the coefficients $\beta'_i, i=1,2,\dots,l$ are non-zero scalars. Thus, we get non-zero scalars $\alpha_1,\alpha,\ldots, \alpha_k, \beta'_1,\beta'_2,\ldots,\beta'_l$ such that \[(\alpha_1 x_1+\alpha_2 x_2+\ldots+\alpha_{k_1} x_{k_1}+\dots+\alpha_k x_k)+(\beta'_1 y_1+\beta'_2 y_2+\ldots+\beta'_{k_2} x_{k_2} +\dots +\beta'_l y_l)=0\]  Further, \[(\alpha = \alpha_1+\alpha_2+\dots + \alpha_{k_1})+(\beta'_1+\beta'_2+\dots + \beta'_{k_2})=0,\] i.e. the sum of the coefficients of the elements in $(C_1 \cup C_2)\cap T$ is zero. Therefore $C_1 \cup C_2$ is dependent set of $M_T.$
\end{proof}
	\begin{remark}
		Let $C_1$ and $C_2$ be $NPT$-circuits of $M,$ and $I$ be an independent set of $M.$ Then $C_1\cup C_2\cup I$ cannot be a circuit of $M_T.$
	\end{remark}

\noindent The above lemma characterizes all the circuit of $M'_T$ containing $z.$ We denote the class of such circuits by $\mathcal {C}_z.$ Thus
\[ \mathcal {C}_z = \{ C\cup z: C~ is~ NPT~ circuit~ of~ M \} \]
\begin{lemma}\label{L2}
	Let $C$ be a circuit of $M'_T.$ Then $z \in C$ if and only if $C\setminus z$ is an $NPT$-circuit of $M.$
\end{lemma}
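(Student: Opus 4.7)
The plan is to work directly with the matrix $A'_T$ and translate membership in circuits into linear dependence relations on its columns, exploiting the special structure that column $z$ has a $1$ in the last row and zeros elsewhere, while a column indexed by $e\in E$ has the same last-row entry as $\mathbf{1}_T(e)$.

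For the $(\Leftarrow)$ direction, assume $D:=C\setminus z$ is an $NPT$-circuit of $M$. Pick any circuit relation $\sum_{d\in D}\alpha_d\, d=0$ in $A$ with every $\alpha_d\neq 0$; by the $NPT$ hypothesis the quantity $s:=\sum_{d\in D\cap T}\alpha_d$ is nonzero $\pmod p$. Reading this same combination in $A'_T$, the first rows sum to zero while the last row sums to $s$; adding $-s\cdot z$ cancels the last row, which exhibits $D\cup z$ as a dependent set in $M'_T$. To show it is minimal I would separately check that (i)~any proper subset $D'\subsetneq D$ is independent in $M$, hence in $M'_T$ (appending rows and appending a column disjoint from $D'$ preserves independence), and (ii)~for each $e\in D$, the set $(D\setminus e)\cup z$ is independent: a purported dependence, when restricted to the first rows, would force a vanishing combination of the columns of $D\setminus e$ in $A$, which is impossible since $D\setminus e$ is independent in $M$; the last row then forces the coefficient on $z$ to be zero as well.

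For the $(\Rightarrow)$ direction, assume $z\in C$ and set $D:=C\setminus z$. A minimal dependence relation realizing $C$ in $A'_T$ has the form $\sum_{d\in D}\alpha_d\, d+\delta z=0$. Minimality of $C$ forces $\delta\neq 0$ (else $D$ would already be dependent in $M'_T$) and forces some $\alpha_d\neq 0$ (else $\delta z=0$); projecting onto the first rows yields a nontrivial relation among the columns of $D$ in $A$, so $D$ is dependent in $M$. Next I would upgrade ``dependent'' to ``circuit'': any circuit $D^\star\subseteq D$ of $M$ is either a member of $\mathcal C_0$, in which case $D^\star$ itself is a circuit of $M_T$ and hence of $M'_T$ contradicting the minimality of $C$, or else $D^\star$ is an $NPT$-circuit, in which case the $(\Leftarrow)$ direction already proved gives that $D^\star\cup z$ is a circuit of $M'_T$ properly contained in $C$, again a contradiction; therefore $D^\star=D$ and $D$ is a circuit of $M$.

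It remains to rule out that $D$ lies in $\mathcal{C}_0$, since that would again make $D$ a circuit of $M'_T$ inside $C$. If $D\cap T=\emptyset$, the last row of $A_T$ (and of $A'_T$) is zero on $D$, so the $M$-dependence of $D$ extends verbatim to a circuit of $M_T$; if $D$ is a $PT$-circuit, then by definition $D$ is already a circuit of $M_T$; in either case $D$ is a circuit of $M'_T$ properly contained in $C$, contradicting that $C$ is a circuit. Hence $D\cap T\neq\emptyset$ and $D$ is not a $PT$-circuit, so $D$ is an $NPT$-circuit of $M$, completing the proof. The main conceptual hurdle is the circuit-upgrade step in the $(\Rightarrow)$ direction, since it genuinely needs the previously-established $(\Leftarrow)$ implication to dispose of the $NPT$ case of a proper sub-circuit.
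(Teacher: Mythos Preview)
Your proof is correct and follows essentially the same matrix-level argument as the paper: both directions amount to reading off linear dependences in $A'_T$, using that the column $z$ is supported only on the last row and that the last-row entries of the $E$-columns are the indicator of $T$. The only organizational difference is that you prove $(\Leftarrow)$ first and then invoke it to dispose of a proper $NPT$-subcircuit in the $(\Rightarrow)$ direction, whereas the paper proves $(\Rightarrow)$ first and redoes that small computation in place; your final paragraph ruling out $D\in\mathcal{C}_0$ is already implicit in your circuit-upgrade step, so it is harmless but redundant.
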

\begin{proof}
	Assume that $z \in C$ is a circuit of $M'_T.$ Let $C'= C\setminus e.$ Then $C'$ is a dependent set of $M.$ If $C'$ itself is a $PT$ circuit, or it contains a $PT$ circuit, then such a circuit is also a circuit of  $M'_T.$ Thus there exists a circuit contained in $C,$ a contradiction. Next assume that $C'$ contains an $NPT$ circuit $C_1$ of $M.$ Then $C_1$ is an independent set of $M'_T.$ Note that $\sum_{x\in C_1\cap T}$ coeff.$(x)$  $ \not\equiv 0(mod ~p).$ Let  $\sum_{x\in C_1\cap T}$ coeff.$(x)$  $=\alpha.$ Multiplying the column $z$ by $-\alpha,$ the set $C_1\cup z$ is a dependent set of $M'_T$ contained in $C,$ which is a contradiction. Therefore we conclude that $C$ must be and $NPT-$ circuit. \\
	%If $C\cup z$ is not a circuit of $M'_T$ then it contains a circuit say $C_1$ of $M'_T.$ get a circuit contained in If $C'$ is not $NPT$-circuit of $M,$ then it is $PT$-circuit of $M$ and hence it is circuit of $M_T$ as well as $M'_T.$ Thus we get a circuit $C$ of $M'_T$ contained in $C\cup z,$ which is not possible.
	Conversely, suppose $C'$ is an $NPT$-circuit of $M.$ Then as argued earlier the set $C'\cup z$ is a dependent set in $M'_T$. If $C'\cup z$ is not a circuit of $M'_T$ then it contains a circuit say $C_1$ of $M'_T.$ One of the following case will occur;\\
	\textbf{Case (i)} If $z\in C_1,$ then $C_1\setminus z$ is dependent set of $M$ contained in $C',$ which is not possible.\\
	\textbf{Case (ii)} If $z\notin C_1,$ then $C_1\subseteq C',$ a contradiction to the fact that $C'$ is an independent set of $M'_T.$
	Hence $C'\cup z$ is a circuit of $M'_T.$
\end{proof}

\section{Circuits, Bases, and Rank function of $M_T$  and $M'_T$}

\noindent Theorem \ref{L1} describes the circuits of the splitting matroids $M_T,$ and the element splitting matroids $M'_T$ in terms of the circuits of the original $p$-matroids $M.$
\begin{theorem}\label{L1}
	Let $M$ be a $p$-matroid on the ground set $E$ and $T \subset E.$ Then
	\begin{enumerate}
		\item  $\mathcal{C}(M_T)= \mathcal{C}_0\cup\mathcal {C}_1\cup \mathcal {C}_2$
		\item  $\mathcal{C}(M'_T)= \mathcal{C}_0\cup\mathcal {C}_1\cup \mathcal {C}_2\cup \mathcal{C}_z$
	\end{enumerate}
	 where
	\begin{description}
		\item $\mathcal {C}_0 = \{ C \in \cal C$$(M)~ :~ C$ is a $PT$-circuit or $C\cap T=\phi$ $\}$;
		\item $\mathcal {C}_1 =$  minimal ~elements ~of $\{ C \cup I : (C\cup I)$ is a $PT$-dependent set of $M$ \};
		\item $\mathcal {C}_2 =$  minimal ~elements ~of $\{ C_1 \cup C_2 : C_1, C_2~$ are  $NPT$ circuits, $C_1\cap C_2 = \phi$ and $C_1 \cup C_2$ contains no member of $\mathcal{C}_0,$ and  $\mathcal{C}_1$\};
		
		\end{description}
\end{theorem}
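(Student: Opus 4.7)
The plan is to prove part (1) by double inclusion, then derive part (2) from (1) together with Lemma \ref{L2} and the observation that $M_T = M'_T \setminus z$.

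For the easy direction $\mathcal{C}_0\cup\mathcal{C}_1\cup\mathcal{C}_2 \subseteq \mathcal{C}(M_T)$, I would treat each family separately. A member of $\mathcal{C}_0$ is either a PT-circuit, which is a circuit of $M_T$ by definition, or a circuit of $M$ disjoint from $T$, in which case the extra row of $A_T$ is identically zero on its support and minimal dependence transfers verbatim from $M$ to $M_T$. For $\mathcal{C}_1$, a PT-dependent set is dependent in $M_T$ because its defining linear combination vanishes on the original rows while the $T$-coefficient sum condition makes it vanish on the extra row; taking minimal such sets produces circuits of $M_T$. For $\mathcal{C}_2$, Lemma \ref{dep} supplies the dependence, and the minimality condition built into the definition yields circuits.

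For the hard direction, I would take an arbitrary circuit $C$ of $M_T$ and trace its origin in $M$. Since dependence in $M_T$ implies dependence in $M$, $C$ contains a circuit $C_1$ of $M$. If $C_1 = C$, then $C$ is already a circuit of $M$ surviving in $M_T$, hence either a PT-circuit or disjoint from $T$, placing $C \in \mathcal{C}_0$. Otherwise $C_1 \subsetneq C$, and every circuit of $M$ strictly inside $C$ must be NPT (a $\mathcal{C}_0$-member properly inside $C$ would itself be a circuit of $M_T$, violating minimality of $C$). Set $I := C \setminus C_1$ and split on whether $I$ is independent in $M$. If it is, then $C = C_1 \cup I$ witnesses a PT-dependent set (the NPT-circuit $C_1$ is disjoint from $I$, they jointly meet $T$ since $C_1$ is NPT, and the dependency witnessing $C$ in $M_T$ supplies the required linear relation), and the minimality of $C$ among dependent sets of $M_T$ forces minimality among PT-dependent sets, giving $C \in \mathcal{C}_1$. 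If $I$ is dependent in $M$, then $I$ contains a circuit $C_2$ of $M$, necessarily NPT and disjoint from $C_1$; by Lemma \ref{dep} the union $C_1 \cup C_2 \subseteq C$ is already dependent in $M_T$, so minimality forces $C = C_1 \cup C_2 \in \mathcal{C}_2$.

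Part (2) is then short: since $M_T = M'_T \setminus z$, the circuits of $M'_T$ avoiding $z$ are precisely the circuits of $M_T$, contributing $\mathcal{C}_0 \cup \mathcal{C}_1 \cup \mathcal{C}_2$ by part (1); Lemma \ref{L2} identifies the circuits of $M'_T$ containing $z$ with $\mathcal{C}_z$, and the union gives the claim. The main obstacle I anticipate is the minimality bookkeeping in the hard direction of (1), specifically ruling out a hybrid shape $C_1 \cup C_2 \cup I$ (two disjoint NPT-circuits plus an independent tail) as a potential circuit of $M_T$; the remark immediately following Lemma \ref{dep} is the tool that forbids this configuration and closes the case analysis to the clean trichotomy $\mathcal{C}_0, \mathcal{C}_1, \mathcal{C}_2$.
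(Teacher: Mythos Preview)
Your proposal is correct and follows essentially the same route as the paper's proof: the easy inclusion is dispatched by Definition~\ref{D1} and Lemma~\ref{dep}, the hard inclusion is handled by the same trichotomy (circuit of $M$; NPT-circuit plus independent complement; two disjoint NPT-circuits), and part~(2) is reduced to part~(1) via Lemma~\ref{L2} and the deletion $M_T = M'_T\setminus z$. Your account is in fact slightly more explicit than the paper's in justifying why the $M_T$-dependency of $C$ furnishes the coefficient condition needed for a $PT$-dependent set, and in invoking the remark after Lemma~\ref{dep} to exclude the hybrid shape.
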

\begin{proof}
	In proving $1$ the inclusion $ \mathcal{C}_0\cup\mathcal {C}_1\cup \mathcal {C}_2\subseteq \mathcal{C}(M_T)$ follows from the Definition \ref{D1} and Lemma \ref{dep}. For other way inclusion, let $C$ be a circuit of $M_T.$ Then $C$ is a dependent set of $M.$ Following are the three possible cases for $C$ to be in $M:$\\
	Case-I: $C$ is a circuit in $M.$ Then $C \in \mathcal{C}_0.$\\ 
	Case-II: $C$ is not a circuit of $M.$  Then $C$ must contain a circuit $C_1$ of $M.$ Now consider $C\setminus C_1:$\\
	Subcase-I: The set $C\setminus C_1$ is dependent in $M.$ Then note that $(C\setminus C_1)\cap T \neq \phi.$ Otherwise $C\setminus C_1$ contains an $NPT$ circuit, say $C'_1,$ contained in the circuit $C$ of $M_T.$ which is impossible. Therefore $C\setminus C_1$ must contain an $NPT$ circuit,say $C_2.$ The set $S= C\setminus(C_1 \cup C_2)$ cannot be non empty in $M,$ otherwise the set $C=C_1 \cup C_2 \cup S$ contains two disjoint $NPT$-circuits $C_1,C_2.$ By the Lemma \ref{dep} $C_1 \cup C_2$ is dependent set of $M_T$ contained in the circuit $C$ of $M_T,$ a contradiction. In this case, $C \in \mathcal{C}_2$\\
	Subcase-II: The set $C\setminus C_1$ is independent in $M.$ Then $C=C_1\cup I,$ where $I=C\setminus C_1.$ The set $I \cap T \neq\phi,$ otherwise $C_1$ is an $NPT-$circuit. Therefore $C\in \mathcal{C}_2.$ \\
	In proving $2$ the inclusion $ \mathcal{C}(M_T)\cup \mathcal{C}_z \subseteq \mathcal{C}(M'_T) $follows from the Definition \ref{D1} and Lemma \ref{L2}. For the other inclusion let $C$ be the circuit of $M'_T.$ \\
	\textbf{Case (i)} Let $z\in C.$ Then by the Lemma \ref{L1}, $C\in \mathcal {C}_z.$\\
	\textbf{Case (ii)} If $z\notin C,$ then $C\in \mathcal {C}(M_T).$\\
	Therefore $\mathcal{C}(M'_T)\subseteq \mathcal{C}(M_T)\cup \mathcal{C}_z. $

	%The set $C\setminus(C_1 \cup C_2)$ cannot be independent in $M$, otherwise $C$ will be a union of two disjoint $NPT-$ circuits,$C_1$ and $C_2,$ and an independent set $C\setminus(C_1 \cup C_2).$ By remark \ref{label} $C$ cannot be a circuit of $M_T.$ Also set $C\setminus(C_1 \cup C_2)$ cannot be dependent in $M,$ otherwise $C\setminus(C_1 \cup C_2)$ must contain an $NPT$ circuit,say $C'_2.$ 	
\end{proof}

\noindent The collection of the bases of $M_T$ and $M'_T$ are denoted by $\mathcal{B}(M_T),$ and $\mathcal{B}(M'_T),$ respectively. We now provide the description of the members of $\mathcal{B}(M_T),$ and $\mathcal{B}(M'_T),$ in terms of the basis elements of $M.$
\begin{theorem}\label{k}
	Let $M$ be a $p$-matroid, $T \subseteq E,$ and $M$ contains an $NPT$-circuit. Then
	\begin{enumerate}
		\item $\mathcal{B}(M_T)=\mathcal{B}_1=\{B\cup x:B\in \mathcal{B}(M), x\notin B$ and $B\cup x$ contains neither $PT$-circuit nor $PT$-dependent set\}
		\item $\mathcal{B}(M'_T)=\mathcal{B}_1 \cup \mathcal{B}_z,$ where $\mathcal{B}_z = \{ B \cup z : B \in \mathcal{B}(M)\}.$
	\end{enumerate}
	
\end{theorem}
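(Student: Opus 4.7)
The plan is to begin with the rank identity $r(M_T) = r(M'_T) = r(M)+1$, since this pins down the size of every basis. Appending the row $\chi_T$ to the matrix $A$ raises the row rank iff $\chi_T \notin \mathrm{rowsp}(A)$, and an NPT-circuit exhibits a column dependence of $A$ whose restriction to $T$ has nonzero coefficient sum modulo $p$, which is exactly the certificate that $\chi_T$ is not a linear combination of the rows of $A$. Adjoining the column $z$ does not further change the row rank, so $r(M'_T) = r(M_T) = r(M)+1$. Thus every basis of either splitting matroid must have cardinality $|B|+1$ for $B \in \mathcal{B}(M)$.

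For statement $(1)$, the inclusion $\mathcal{B}_1 \subseteq \mathcal{B}(M_T)$ is handled by contradiction: if $B \cup x \in \mathcal{B}_1$ hosted a circuit $C$ of $M_T$, Theorem \ref{L1} would place $C$ in $\mathcal{C}_0 \cup \mathcal{C}_1 \cup \mathcal{C}_2$. A $\mathcal{C}_0$ member can only be the fundamental circuit $C(x, B)$ (the unique $M$-circuit in $B \cup x$), ruled out by the PT-circuit hypothesis; a $\mathcal{C}_1$ member is a PT-dependent set, ruled out directly; and $\mathcal{C}_2$ is impossible because $B \cup x$ admits at most one $M$-circuit, not two disjoint NPT-circuits. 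For the reverse inclusion I would invoke the submatrix bound $r_{M_T}(S) \le r_M(S) + 1$: applied to $B' \in \mathcal{B}(M_T)$ this yields $r_M(B') \ge r(M)$, so $B'$ spans $M$ and contains an $M$-basis $B$ with $B' = B \cup x$ for a unique $x$. Independence of $B'$ in $M_T$ then forbids $C(x,B) \in \mathcal{C}_0$ and any PT-dependent subset (which would be a $\mathcal{C}_1$ circuit sitting inside $B'$), placing $B' \in \mathcal{B}_1$.

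For statement $(2)$, I would split on whether $z \in B'$. If $z \notin B'$, then $B' \subseteq E$ and the circuits of $M'_T$ confined to $E$ coincide with those of $M_T$, so $B'$ is an $M_T$-basis and lies in $\mathcal{B}_1$ via $(1)$. If $z \in B'$, writing $B = B' \setminus z$, any $M$-circuit $C \subseteq B$ is either in $\mathcal{C}_0$ (which would already make $B'$ dependent in $M'_T$) or NPT (so $C \cup z \in \mathcal{C}_z$ sits inside $B'$), both contradicting independence of $B'$; hence $B$ is $M$-independent of size $r(M)$, giving $B \in \mathcal{B}(M)$ and $B' \in \mathcal{B}_z$. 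The forward inclusion $\mathcal{B}_1 \cup \mathcal{B}_z \subseteq \mathcal{B}(M'_T)$ mirrors this: elements of $\mathcal{B}_1$ are already $M_T$-independent and contain no $z$, so they avoid $\mathcal{C}_z$; and $B \cup z$ with $B \in \mathcal{B}(M)$ avoids every circuit of $M'_T$, because $B$ is $M$-independent and therefore contains no NPT-circuit that could pair with $z$ to form a $\mathcal{C}_z$ member.

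The main obstacle I expect is the reverse direction of $(1)$: recovering the structural form $B \cup x$ from a bare independence statement in $M_T$ hinges on the rank inequality $r_{M_T}(S) \le r_M(S) + 1$ together with a careful accounting of the unique fundamental $M$-circuit that appears in any such superset of an $M$-basis, which is what drives the translation of $M_T$-independence into the combined PT-circuit and PT-dependent-set conditions defining $\mathcal{B}_1$.
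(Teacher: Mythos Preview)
Your argument for part (2) is essentially identical to the paper's: the paper also splits on whether $z\in B'$, uses the rank identity $r(M'_T)=r(M)+1$, and in the $z\in B'$ case rules out $M$-dependence of $B'\setminus z$ by producing an NPT-circuit $C$ with $C\cup z\in\mathcal C_z$ inside $B'$. For part (1) the paper gives no argument at all, deferring entirely to Theorem~2.12 of \cite{mjg}; your direct proof via the circuit description in Theorem~\ref{L1} and the rank bound $r_{M_T}(S)\le r_M(S)+1$ is therefore strictly more informative here and is the natural approach.

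One small gap to patch in your forward inclusion for (1): when you rule out a $\mathcal C_0$-member inside $B\cup x$, you appeal only to the ``no PT-circuit'' hypothesis, but $\mathcal C_0$ also contains circuits $C$ with $C\cap T=\emptyset$, and such a $C$ is neither a PT-circuit nor an NPT-circuit, so the stated hypotheses on $\mathcal B_1$ do not exclude it directly. You should either note that the intended reading of ``PT-circuit'' in the statement of $\mathcal B_1$ is ``member of $\mathcal C_0$'' (this is how the cited source treats it), or explicitly add the clause that the fundamental circuit $C(x,B)$ must meet $T$; otherwise a set $B\cup x$ with $C(x,B)\cap T=\emptyset$ would lie in $\mathcal B_1$ as written yet be dependent in $M_T$.
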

\begin{proof}
	Proof of $1$ follows by using similar arguments given in the proof of Theorem 2.12 of \cite{mjg}.\\
	In proving $2,$ it is easy to observe that $\mathcal{B}(M_T)\cup \mathcal{B}_z \subseteq \mathcal{B}(M'_T).$ Next assume that $B \in \mathcal{B}(M'_T).$ Then $rank (B)=rank (M)+1.$ Let $z \in B.$ Then $B'= B\setminus z$ is an independent set of $M_T$ of size $rank(M).$ If $B'$ is also independent set of $M,$ then $B \in \mathcal{B}_z.$ If $B'$ is  a dependent set of $M,$ then it must contain an $NPT-$ circuit $C.$ By Lemma \ref{L1}, $C\cup z$ is a circuit of $M'_T$ contained in the basis $B$ of $M'_T,$ a contradiction. If $z\notin B,$ then $B$ is an independent set of size $rank(M)+1.$ Therefore $B \in \mathcal{B}(M_T).$
\end{proof}
\noindent The rank functions of $M,$ $M_T,$ and $M'_T$ are denoted by $r,$ $r'$ and $r"$ respectively.
\begin{theorem}\label{s}
	Suppose $S\subseteq E(M).$ Then
	\begin{equation}
	\begin{split}
	r'(S) &= r(S) , \text { ~~~~~  if S contains no NPT-circuit of M; and}\\
	&= r(S)+1,\text {~   if S contains an NP-circuit of M.}
	\\
	r''(S \cup z) & = r(S) + 1. 
	\end{split}
	\end{equation}
\end{theorem}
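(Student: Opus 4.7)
The plan is to work with the matrix realizations of $M$, $M_T$, and $M'_T$ and extract both parts of the statement by a single application of rank-nullity.

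First I would observe that the restriction of $A_T$ to the columns indexed by $S$ is just $A[\cdot,S]$ with a single extra row equal to the characteristic vector $\mathbf{1}_{T\cap S}$. The null space of $A[\cdot,S]$ is precisely the cycle space of $M|S$ — the space of column-dependency vectors supported in $S$ — and its dimension is the nullity $|S|-r(S)$. The null space of $A_T[\cdot,S]$ is then the subspace cut out by the additional linear constraint $\varphi(w):=\sum_{e\in T\cap S}w_e=0$. Consequently either $\varphi$ vanishes on the whole cycle space, in which case the nullity is preserved and $r'(S)=r(S)$, or $\varphi$ is non-zero there, in which case the nullity drops by exactly one and $r'(S)=r(S)+1$.

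Next I would translate this dichotomy into the language of circuits. The cycle space of $M|S$ is spanned by the circuit-dependency vectors $\alpha_C$ of those circuits $C$ of $M$ that lie inside $S$, and by the definition of NPT-circuit a circuit $C\subseteq S$ satisfies $\varphi(\alpha_C)\neq 0$ exactly when $C$ is NPT. Therefore $\varphi$ vanishes identically on the cycle space iff every circuit of $M$ contained in $S$ is either a PT-circuit or disjoint from $T$, equivalently iff $S$ contains no NPT-circuit. This settles the first two clauses.

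For the third clause I would compute $r''(S\cup z)$ by rank-nullity applied directly to $A'_T[\cdot,S\cup z]$. The kernel consists of pairs $(w_S,w_z)$ satisfying $A[\cdot,S]w_S=0$ and $\varphi(w_S)+w_z=0$; since $w_z$ is forced by $w_S$, its dimension equals the nullity of $A[\cdot,S]$, namely $|S|-r(S)$, and
\[r''(S\cup z)=(|S|+1)-(|S|-r(S))=r(S)+1.\]

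The main subtlety I anticipate is the linear-algebraic fact invoked in the second paragraph: one must check that the cycle space of $M|S$ really is spanned by the circuit-dependencies of circuits of $M$ contained in $S$, and not by truncations of circuits that reach outside $S$. This is standard for matroids representable over a field but is the conceptual bridge between the matrix viewpoint and the matroid phrasing, so I would write it out explicitly in the final version.
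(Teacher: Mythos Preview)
Your argument is correct, but it follows a genuinely different line from the paper's. The paper derives the first two clauses from the basis description of $M_T$ in Theorem~\ref{k}: it takes bases $B$ of $M|_S$ and $B'$ of $M_T|_S$ and argues that $|B'|=|B|$ when $S$ contains no NPT-circuit, while $B'=B\cup e$ for some $e$ when $S$ does contain one; the clause for $r''(S\cup z)$ is simply attributed to Definition~\ref{D1}. Your route bypasses Theorem~\ref{k} entirely and works directly with rank--nullity on the column submatrices: the extra row $\mathbf{1}_{T\cap S}$ imposes a single linear functional $\varphi$ on the null space of $A[\cdot,S]$, and you detect whether $\varphi$ is identically zero there by testing it on the spanning set of circuit-dependency vectors, which is exactly where the NPT condition enters. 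This makes the proof self-contained and handles all three clauses uniformly (your computation for $r''(S\cup z)$ is more explicit than the paper's one-line dismissal). The paper's approach, by contrast, stays in purely matroidal language once the basis theorem is available, at the cost of depending on that earlier result. The point you flag about the cycle space of $M|_S$ being spanned by the dependency vectors of circuits of $M$ contained in $S$ is indeed the only place where care is needed, and it is standard for represented matroids.
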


\begin{proof}
	Let $B$ and  $B'$ be the bases of $M|_S$ and $M_T|_S,$ respectively, then $r(S)=|B|$ and $r'(S)=|B'|.$ If $S$ contains no $NPT$-circuit, then $|B'|=|B|.$ Therefore $r(S)=r'(S).$ If $S$ contains an $NPT$-circuit, then by the Theorem \ref{k}, $B'=B \cup  e$ for some $B \in \mathcal{B}(M|_S),$$e\in (S\setminus B)$ and $B\cup e$ contains a unique $NPT$-circuit. Therefore $r'(S)=r'(M_T|_S)=|B'|=|B\cup e|=|B|+1=r(S)+1.$\\
	The equality $r''(S \cup z) = r(S) + 1 $follows from the Definition \ref{D1}.
\end{proof}

\section{Effect of splitting and element splitting operations on connected and Eulerian $p$-matroids}
Higher connectivity of graphs and matroids is well explored in \cite{SlaterC4CG, BixbyT3C, TutteLM, TutteT3CG, TutteCM}. In this section we study the effect of splitting and element splitting operations on connected and Eulerian $p$-matroids.  In the following lemma, we provide a sufficient condition to obtain a connected $p$-matroid from a connected $p$-matroid using splitting operation.
\begin{lemma}\label{con}
	Let $M$ be a connected $p$-matroid and $T\subset E(M)$. If for every proper subset $X$ of $E(M)$ with $|X|\geq 1$, either $X$ or $Y = E(M)\setminus X$ contains an $NPT$-circuit of $M,$ then $M_T$ is connected.
\end{lemma}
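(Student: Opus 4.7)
The plan is to establish the connectivity of $M_T$ via the standard rank characterization: a matroid $N$ on ground set $E$ is connected if and only if $r_N(X) + r_N(E \setminus X) > r_N(E)$ for every non-empty proper subset $X$ of $E$. The goal is therefore to show this strict inequality holds for $r'$, using the connectivity of $M$ (which supplies the same inequality for $r$) together with the translation between $r$ and $r'$ provided by Theorem \ref{s}.

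First I would pin down $r'(E(M_T))$. Taking any singleton $X = \{e\}$, the hypothesis forces $Y = E(M) \setminus \{e\}$ to contain an $NPT$-circuit, since singletons contain no circuit, and hence $E(M)$ itself contains an $NPT$-circuit. Theorem \ref{s} then gives $r'(E(M)) = r(M) + 1$. Next, for an arbitrary partition $X \cup Y = E(M)$ with both parts non-empty, the hypothesis says at least one of them, say $X$, contains an $NPT$-circuit. Applying Theorem \ref{s} to $X$, and using the trivial bound $r'(Y) \geq r(Y)$ on the other side, yields
\[ r'(X) + r'(Y) \geq (r(X) + 1) + r(Y) = r(X) + r(Y) + 1. \]
Connectivity of $M$ gives $r(X) + r(Y) \geq r(M) + 1$, so
\[ r'(X) + r'(Y) \geq r(M) + 2 > r(M) + 1 = r'(E(M_T)), \]
which is precisely the strict inequality needed for connectivity of $M_T$.

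The whole argument reduces to a short rank computation once Theorem \ref{s} is in hand, so I do not anticipate a serious obstacle. The only minor care required is to use $r'(S) \geq r(S)$ on the side not guaranteed to contain an $NPT$-circuit; the hypothesis of the lemma is tailored precisely to supply the extra $+1$ on at least one side of every separation, and no finer information about the circuits of $M_T$ from Theorem \ref{L1} is needed.
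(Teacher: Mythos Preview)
Your proof is correct and follows essentially the same approach as the paper: both arguments use the rank characterization of connectivity together with Theorem~\ref{s} to compare $r'(X)+r'(Y)-r'(E)$ against $r(X)+r(Y)-r(E)$. The only cosmetic difference is that the paper argues by contradiction and splits into the cases where both sides or only one side contains an $NPT$-circuit, whereas your uniform bound $r'(Y)\geq r(Y)$ handles both cases at once and you explicitly justify $r'(E)=r(M)+1$, which the paper uses tacitly.
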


\begin{proof}
	Note that, $M$ is a connected $p$-matroid,which implies that $M$ has no $1$-separation. On the contrary, assume $M_T$ is not connected. That is, $M_T$ has $1$-separation,say $(X,Y).$ Therefore \[r'(X) + r'(Y) -r'(M_T) < 1.\] If $X$ and $Y$ both contains $np$-circuits then, by lemma \ref{s},  we have\[r(X)+1+r(Y)+1-r(M)-1 = r(X)+r(Y)-r(M) + 1 < 1.\] \noindent Thus, we get a contradiction to the fact $r(X)+r(Y)-r(M)\geq 0. $ Further, if only one of $X$ or $Y,$ say $X,$ contains an $np$- circuit, then we have\[r'(X) + r'(Y) -r'(M_T) = r(X)+1+r(Y)-r(M)-1 < 1.\] That is  \[r(X)+r(Y)-r(M) < 1.\] Thus $(X,Y)$ gives a $1$-separation of $M$ which is not possible.
\end{proof}
\noindent  Theorem \ref{nconn} characterizes a class of $n$-connected $p$-matroids whose $n$-connectivity is preserved under splitting operation.
\begin{theorem} \label{nconn}
Let $M$ be an $n$-connected and vertically $(n + 1)$-connected binary
matroid, $n\geq 2,~ |E(M)|\geq 2(n-1)$ and girth of $M$ is at least $n+1.$ Let $X\subset E(M)$ with $|X|\geq n.$ Then $M_X$ is $n$- connected if and only if for any $(n-1)$ element subset $S$ of $E(M)$ there is an $NPT$-circuit $C$ of $M$ such that $S\cap C =\phi.$
\end{theorem}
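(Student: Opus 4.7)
\medskip
\noindent\textbf{Proof plan.}
I would prove the two implications separately, with the key computational input in both directions being the case-split of Theorem \ref{s} for the rank function $r'$ of $M_X$ in terms of $r$ and of whether a set contains an $NPT$-circuit.

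\medskip
\noindent\emph{Necessity.} Assume $M_X$ is $n$-connected, and suppose for contradiction that some $(n-1)$-element subset $S\subset E(M)$ meets every $NPT$-circuit of $M$. Because the girth of $M$ is at least $n+1$, the set $S$ is independent and contains no circuit whatsoever; in particular, no $NPT$-circuit lies inside $S$. By our contradiction hypothesis no $NPT$-circuit lies inside $E(M)\setminus S$ either. Theorem \ref{s} (the first case, applied to both blocks) therefore gives
\[
r'(S)+r'(E(M)\setminus S)-r'(M_X)=r(S)+r(E(M)\setminus S)-r(M)-1.
\]
Since $S$ is independent, $r(S)=n-1$. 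Since $|E(M)|\geq 2(n-1)$ we have $|E(M)\setminus S|\geq n-1$, and the $n$-connectedness of $M$ forces $r(S)+r(E(M)\setminus S)-r(M)\geq n-1$, hence $r(E(M)\setminus S)=r(M)$. Substituting gives value $n-2$ on the right-hand side, exhibiting $(S,E(M)\setminus S)$ as an $(n-1)$-separation of $M_X$, contradicting $n$-connectedness.

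\medskip
\noindent\emph{Sufficiency.} Now assume the $NPT$-circuit avoidance condition and suppose, for contradiction, that $M_X$ has a $k$-separation $(A,B)$ with $k\leq n-1$ and $|A|,|B|\geq k$. I would split on which of $A,B$ contain $NPT$-circuits, using Theorem \ref{s} in each case.

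If both $A$ and $B$ contain an $NPT$-circuit, then $r'(A)+r'(B)-r'(M_X)=r(A)+r(B)-r(M)+1\leq k-1$, so $r(A)+r(B)-r(M)\leq k-2<k$; this contradicts $n$-connectedness of $M$ since $|A|,|B|\geq k$ and $k\leq n-1$. If exactly one of $A,B$ contains an $NPT$-circuit, the same calculation yields $r(A)+r(B)-r(M)\leq k-1$, again contradicting $n$-connectedness of $M$.

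The remaining and main case is when neither $A$ nor $B$ contains an $NPT$-circuit. First I would show $|A|,|B|\geq n$: if say $|A|\leq n-1$, pad $A$ to an $(n-1)$-element set $S$; by hypothesis there is an $NPT$-circuit $C$ of $M$ with $C\cap S=\phi$, hence $C\subseteq B$, contradicting that $B$ contains no $NPT$-circuit. Next, since girth is at least $n+1$, every $n$-element subset of $E(M)$ is independent, so $r(A),r(B)\geq n$. Theorem \ref{s} now gives $r'(A)+r'(B)-r'(M_X)=r(A)+r(B)-r(M)-1\leq k-1$, i.e.\ $r(A)+r(B)-r(M)\leq k\leq n-1$. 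Combined with $r(A),r(B)\geq n$, this exhibits $(A,B)$ as a vertical $n$-separation of $M$, contradicting vertical $(n+1)$-connectedness of $M$.

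\medskip
\noindent\textbf{Main obstacle.} The cases where at least one side contains an $NPT$-circuit are essentially immediate from $n$-connectedness of $M$ together with the $+1$ shift in Theorem \ref{s}. The subtle case is the one where neither side contains an $NPT$-circuit; here all three hypotheses on $M$ are used simultaneously — the $NPT$-circuit avoidance hypothesis forces both sides to be large, the girth hypothesis upgrades ``large'' to ``rank at least $n$'', and vertical $(n+1)$-connectedness finally produces the contradiction. Keeping straight exactly which hypothesis closes which gap in this last case is the part that needs the most care.
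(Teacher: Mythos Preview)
Your argument is correct and is precisely the rank-function case analysis the paper intends: the paper's own proof is a single sentence referring to Theorem~2.3 of \cite{conn} with ``$OX$-circuit'' replaced by ``$NPT$-circuit'', and your write-up is exactly that argument spelled out. One small remark: in the necessity direction you tacitly use $r'(M_X)=r(M)+1$, which by Theorem~\ref{s} requires $M$ to have at least one $NPT$-circuit; this is implicit in the non-triviality of the splitting (and is automatic in the sufficiency direction from the avoidance hypothesis), but a one-line acknowledgement would make the argument airtight.
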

\begin{proof}
This can be proved using similar arguments used in the Theorem 2.3 of \cite{conn} by replacing $OX$-circuit by $NPT$-circuit.
\end{proof}

 \noindent The next theorem states that the non-trivial element-splitting operation preserves the connectivity of $p$-matroid.
\begin{theorem}
Let $M$ be a connected $p$-matroid on ground set $E.$ Then $M'_T$ is a
connected $p$-matroid on ground set $E\cup \{z\}$ if and only if $M_T$ is the splitting matroid obtained by applying non-trivial splitting operation on $M.$
\end{theorem}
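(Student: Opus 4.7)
The plan is to prove the two directions separately: $(\Rightarrow)$ by contrapositive, and $(\Leftarrow)$ by a direct $1$-separation argument using Theorem \ref{s}. For the forward direction, suppose the splitting is trivial, so $T$ is a cocircuit of $M$ and $M\equiv M_T$. Equivalently, in the $GF(p)$-representation every dependency has coefficient-sum zero over $T$, so $M$ admits no $NPT$-circuit. By Lemma \ref{L2} every circuit of $M'_T$ through $z$ must come from an $NPT$-circuit of $M$, hence $z$ lies in no circuit of $M'_T$ and is therefore a coloop. The partition $(\{z\},E)$ then gives
\[r''(\{z\})+r''(E)-r''(E\cup\{z\}) = 1 + r(M) - (r(M)+1) = 0,\]
which is a $1$-separation of $M'_T$, contradicting the assumption that $M'_T$ is connected.

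For the reverse direction, assume the splitting is non-trivial; this is equivalent to the existence of at least one $NPT$-circuit in $M$, since the added row $\mathbf{1}_T$ fails to lie in the row space of $A$ iff some dependency of $A$ has non-zero coefficient-sum on $T$. Suppose, for contradiction, that $(X,Y)$ is a $1$-separation of $M'_T$; by symmetry assume $z\in X$. Applying Theorem \ref{s} to each of the three terms yields $r''(X)=r(X\setminus z)+1$, $r''(E\cup\{z\})=r(E)+1$, and $r''(Y)=r(Y)+\epsilon$, where $\epsilon\in\{0,1\}$ equals $1$ precisely when $Y$ contains an $NPT$-circuit. Subtracting gives
\[r''(X)+r''(Y)-r''(E\cup\{z\}) = \bigl(r(X\setminus z)+r(Y)-r(E)\bigr)+\epsilon.\]

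If $X\setminus z\neq\emptyset$, then $(X\setminus z,Y)$ is a non-trivial partition of $E$, and connectivity of $M$ forces $r(X\setminus z)+r(Y)-r(E)\ge 1$, so the right-hand side is $\ge 1$, contradicting the $1$-separation inequality. The only remaining case is $X=\{z\}$ with $Y=E$; then the right-hand side equals $\epsilon$, and the $1$-separation hypothesis demands $\epsilon=0$, i.e.\ $E$ contains no $NPT$-circuit, contradicting non-triviality. Hence $M'_T$ admits no $1$-separation and is connected. The main obstacle is precisely the boundary case $X=\{z\}$, $Y=E$: connectivity of $M$ alone cannot rule it out, and it is only there that the hypothesis of non-triviality (equivalently, existence of an $NPT$-circuit) is indispensable; everything else amounts to the bookkeeping of the rank formulas in Theorem \ref{s} together with the standard $1$-separation characterization of connected matroids.
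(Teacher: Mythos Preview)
Your proof is correct and complete. The paper itself does not give a detailed argument here; it merely states that the proof is similar to Theorem~3.1 of \cite{elt} (the two-element case) and that the same reasoning goes through for general $T$. Your rank-function computation via Theorem~\ref{s}, together with the observation that $M'_T\setminus z = M_T$ so that $r''|_E = r'$, is exactly the kind of argument one expects and supplies the details the paper omits. In particular, your identification of the equivalence ``non-trivial splitting $\Leftrightarrow$ existence of an $NPT$-circuit'' and your isolation of the boundary case $X=\{z\}$ as the one place where non-triviality is genuinely needed are both correct and make the logical structure of the result transparent.
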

\begin{proof}
	Proof is similar to the proof of Theorem 3.1 of \cite{elt}. In this proof, splitting is of two elements $a,b$ but same arguments holds true for general splitting matroid $M_T,$ $|T|\geq 2.$ 
\end{proof}
\noindent Fleischner \cite{FS} used the operation of splitting away a pair of edges from a vertex of degree at least three on graphs to characterize Eulerian graphs. In \cite{DKWag, Welsh} Eulerian matroids are studied. Let $M$ be an Eulerian matroid on ground set $E.$ Then there are disjoint circuits $C_1,$$C_2,$ $\ldots$,$C_{2l-1},$$C_{2l},$$\ldots$,$C_k;2l\leq k,$ of $M$ such that
$E= C_1 \cup C_2 \cup \ldots \cup C_{2l-1} \cup C_{2l}\cup \dots \cup C_k.$
\noindent Let $T \subset E$ and $M_T$ be the splitting matroid. We say that the collection \~{C} $=\{C_1,C_2,\ldots, C_{2l-1},C_{2l},\ldots, C_k\}$ of  disjoint circuits is a \textbf{PT-decomposition}  of $M,$ if for $1\leq j \leq l,$ $T \cap C_{2j-1} \neq \phi,$ $T\cap C_{2j}\neq \phi$  and $(C_{2j-1} \cup C_{2j}) \in \mathcal{C}_2,$ or $C_{2j-1}, C_{2j}$ are $PT$- circuits;  and $ C_j \in \mathcal{C}_0$ for all $j \in \{2l+1,2l+2,\ldots,k\}$

\begin{proposition}\label{e1}
	Let $M$ be a $p$-matroid on ground set $E$ and $T \subset E$. If $M$ is an Eulerian matroid having a $PT$-decomposition, then $M_T$ is Eulerian. 
\end{proposition}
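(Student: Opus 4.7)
The plan is to produce an explicit decomposition of $E(M_T)=E$ into disjoint circuits of $M_T$ directly from the given $PT$-decomposition of $M$. Since a matroid is Eulerian exactly when its ground set is a disjoint union of circuits, it suffices to re-interpret every block of the $PT$-decomposition as a circuit of $M_T$.

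Let $\tilde{\mathcal{C}}=\{C_1,\ldots,C_{2l-1},C_{2l},\ldots,C_k\}$ be the $PT$-decomposition, so $E$ is the disjoint union of the $C_i$'s. I would process this list in three groups. First, for each index $j$ with $2l+1\le j\le k$ we are told $C_j\in\mathcal{C}_0$, so by Theorem \ref{L1}(1) each such $C_j$ is already a circuit of $M_T$; these contribute disjoint circuits of $M_T$ covering $\bigcup_{j=2l+1}^{k} C_j$. Second, for each $j$ with $1\le j\le l$ such that both $C_{2j-1}$ and $C_{2j}$ are $PT$-circuits, they lie in $\mathcal{C}_0$ and so by Theorem \ref{L1}(1) each one is individually a circuit of $M_T$; they together cover $C_{2j-1}\cup C_{2j}$ with two disjoint $M_T$-circuits. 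Third, for each remaining $j$, the defining condition of a $PT$-decomposition ensures $(C_{2j-1}\cup C_{2j})\in\mathcal{C}_2$, and Theorem \ref{L1}(1) again gives that $C_{2j-1}\cup C_{2j}$ is itself a circuit of $M_T$, covering the block with a single $M_T$-circuit.

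Collecting the circuits produced from the three cases yields a family of pairwise disjoint circuits of $M_T$ whose union is exactly $C_1\cup C_2\cup\cdots\cup C_k=E$. By definition this shows that $M_T$ is Eulerian, which is the desired conclusion.

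The argument is essentially a bookkeeping exercise once Theorem \ref{L1} is in hand; the only subtlety is confirming that the three cases of the $PT$-decomposition correspond precisely to circuits coming from $\mathcal{C}_0$ or $\mathcal{C}_2$ of $M_T$, so I would write the three cases out carefully to make clear that disjointness is preserved and that every element of $E$ lies in exactly one chosen circuit. No auxiliary lemma beyond Theorem \ref{L1} is needed, and no nontrivial obstacle is expected.
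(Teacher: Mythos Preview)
Your proposal is correct and follows essentially the same approach as the paper: both arguments simply read off a circuit decomposition of $M_T$ from the given $PT$-decomposition, using Theorem~\ref{L1} to verify that each block (or merged pair of blocks) is a circuit of $M_T$. Your case split is in fact slightly more careful than the paper's, since you explicitly treat the possibility that a pair $C_{2j-1},C_{2j}$ consists of two $PT$-circuits (each then lying in $\mathcal{C}_0$), whereas the paper's proof tacitly folds this into the $\mathcal{C}_2$ case.
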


\begin{proof}
	Let  \~{C} $=\{C_1,C_2,\ldots, C_{2l-1},C_{2l},\ldots, C_k\}$ be a $PT$-decomposition of $M.$ Then \[E= C_1 \cup C_2 \cup \ldots \cup C_{2l-1} \cup C_{2l}\cup \dots \cup C_k, \]
	
	\noindent  $T \cap C_{2j-1} \neq \phi,$ $T\cap C_{2j}\neq \phi$  for $1\leq j \leq l;$ and  $ C_j \in \mathcal{C}_0$ for all $j \in \{2l+1,2l+2,\ldots,k\}$.Let $C'_j =(C_{2j-1} \cup C_{2j}); j \in \{1,2,\ldots, l\}.$ Then by the 
	 of the $PT$-decomposition $C'_j \in \mathcal{C}_2.$ The collection $\{C'_1,\dots, C'_l, C_{2l+1},\dots, C_k\}$ is a circuit decomposition of $M'_T.$ Therefore, $M_T$ is an Eulerian matroid.\\

\end{proof}
\begin{proposition}
Let $M$ be Eulerian $p$-matroid and $T\subset E$. If circuit decomposition of $M$ contains exactly one $NPT$-circuit with respect to $T$ split then $M'_T$ is Eulerian p -matroid.	
\end{proposition}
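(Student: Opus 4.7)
The plan is to promote the given Eulerian decomposition of $M$ to an Eulerian decomposition of $M'_T$ by absorbing the new element $z$ into the unique $NPT$-circuit. Concretely, I would start by writing $E = C_1 \cup C_2 \cup \cdots \cup C_k$ as a union of pairwise disjoint circuits of $M$, which exists because $M$ is Eulerian, and, after relabeling, assume that $C_1$ is the single $NPT$-circuit in this decomposition that is guaranteed by the hypothesis.

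First I would argue that every $C_j$ with $j \ge 2$ lies in $\mathcal{C}_0$. Indeed, since $C_j$ is a circuit of $M$ and is not an $NPT$-circuit by hypothesis, either $C_j \cap T = \emptyset$ or $C_j$ is a $PT$-circuit; in both cases $C_j \in \mathcal{C}_0$, so by Theorem~\ref{L1}(2) each such $C_j$ is already a circuit of $M'_T$. Next I would apply Lemma~\ref{L2} to the $NPT$-circuit $C_1$: the lemma gives that $C_1 \cup \{z\}$ is a circuit of $M'_T$, namely the member of $\mathcal{C}_z$ associated with $C_1$.

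Combining these two observations, the family
\[
\{\, C_1 \cup \{z\},\; C_2,\; C_3,\; \ldots,\; C_k \,\}
\]
is a collection of pairwise disjoint circuits of $M'_T$ (they are disjoint because the $C_j$ are disjoint in $M$, and $z$ appears only in the first set), whose union is exactly $E \cup \{z\} = E(M'_T)$. This is precisely a circuit decomposition of $M'_T$, so $M'_T$ is Eulerian.

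The argument is essentially bookkeeping once Lemma~\ref{L2} and Theorem~\ref{L1} are in hand; no genuine obstacle arises. The only substantive point, and the reason the hypothesis is stated as ``exactly one $NPT$-circuit'', is that $z$ can only be absorbed by attaching it to a single $NPT$-circuit via Lemma~\ref{L2}. If the decomposition contained two or more $NPT$-circuits, the remaining $NPT$-circuits would fail to belong to $\mathcal{C}(M'_T)$ and there would be no natural way to cover them within a disjoint circuit partition, so verifying this matching is the whole content of the proof.
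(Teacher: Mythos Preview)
Your argument is correct and is exactly the natural proof: the circuits $C_2,\ldots,C_k$ survive as members of $\mathcal{C}_0\subseteq\mathcal{C}(M'_T)$, while Lemma~\ref{L2} lets you absorb $z$ into the lone $NPT$-circuit $C_1$, giving a disjoint circuit cover of $E\cup\{z\}$. The paper in fact states this proposition without proof, presumably regarding it as immediate from Lemma~\ref{L2} and Theorem~\ref{L1}; your write-up is precisely the routine verification the authors left to the reader.
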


\bibliographystyle{amsplain}

\end{document}